%% file: Main.tex
\documentclass[12pt,twoside]{article}

	\usepackage{fancyhdr} 
	\usepackage[margin=1in]{geometry}  
	\usepackage{graphicx}              
	\usepackage{amsmath}               
	\usepackage{amsfonts}              
	\usepackage{amsthm}                
	\usepackage{enumitem}
	\usepackage{amssymb}
	\usepackage{mathtools}
	\usepackage{hyperref}
	\ExplSyntaxOn
	\cs_new_eq:NN \intfromalph \int_from_alph:n
	\ExplSyntaxOff

	\newtheorem{thm}{Theorem}[section]
	\newtheorem{lem}[thm]{Lemma}
	\newtheorem{prop}[thm]{Proposition}

	\theoremstyle{remark}

	\theoremstyle{definition}
	\newtheorem{exa}[thm]{Example}
	\newtheorem{Def}[thm]{Definition}
	\theoremstyle{plain}

\begin{document}

		\nocite{*}
		
		\title{The Multiplicity One Defect and Congruence–Modular Exponent Ratios}
		
		\author{Mohammad Masih Hamidi\thanks{E-mail: mohammad.hamidi@mail.mcgill.ca}\\ 
			Department of Mathematics \\
			McGill University\footnote{Address: 805 Sherbrooke Street West, Montreal, QC, Canada}\\
		}
		\date{}
		\maketitle
		
		\input{Abstract}
		\input{Introduction}
	    \input{Defect}
        \input{TheoremB}

	    \input{Gorenstein}
	    \bibliographystyle{plain}
	    \bibliography{Refe}	
	    
	\end{document}

%% file: Abstract.tex
\begin{abstract}
	We introduce a multiplicity one defect that quantitatively measures the
	failure of multiplicity one for the Jacobian of the Shimura curve
	\(X^D(M)\) attached to an indefinite rational quaternion algebra of
	discriminant \(D\) and level \(M\), localized at a maximal ideal of the
	Hecke algebra.  By constructing a certain lattice inside the local Tate module, we obtain an explicit upper bound for the defect in terms of congruences between eigenforms and the lattice–Tate module discrepancies associated with the individual newforms. We then show that the
	local ratio between the congruence exponent and the modular exponent of
	modular abelian varieties attached to newforms is bounded above by the
	multiplicity one defect. 
	Finally, we examine the relation with the Gorenstein property
	and give an explicit example at level \(126\) where the local
	Hecke algebra is Gorenstein even though multiplicity one fails.
	In this example, we compute the multiplicity one defect to be
	\(3\), which is also the local congruence--modular exponent ratio.

\end{abstract}

%% file: Introduction.tex
\section{Introduction}

Throughout this paper, let \(N=DM\), where \(D\) is a product of an even
number of distinct primes and \((D,M)=1\). We denote by \(X^D(M)\) the Shimura
curve over \(\mathbf Q\) attached to the indefinite rational quaternion algebra
of discriminant \(D\) and level \(M\), and by \(J^D(M)\) its Jacobian. Let
$
f\in S_2^{\mathrm{new}}(\Gamma_0^D(M))
$
be a quaternionic newform of weight \(2\) and level \(M\), and let \(A_f\)
denote the dual of the modular abelian variety associated with \(f\). The
variety \(A_f\) occurs as a Hecke-stable abelian subvariety of \(J^D(M)\).

Let \(\mathbb T^D(M)\) denote the Hecke algebra acting on \(J^D(M)\),
generated by the operators \(T_p\) for primes \(p\nmid M\) and \(U_p\) for
primes \(p\mid M\). Associated with \(A_f\) are two closely related arithmetic
invariants: the modular exponent and the congruence exponent. In the notation
of Section~3.2,
\[
\delta^D(M)
=
\exp\!\left(\mathbb T^D(M)_{A_f}/S^D(M)\right),
\qquad
r^D(M)
=
\exp\!\left(\mathbb T^D(M)_{A_f}/R^D(M)\right),
\]
where \(S^D(M)\) is the intersection ideal and \(R^D(M)\) is the congruence
ideal. Their relation is closely connected with multiplicity one for the
action of the Hecke algebra on the Jacobian.

Let \(\mathfrak m\) be a maximal ideal of \(\mathbb T^D(M)\) of residue
characteristic \(\ell\). Under the multiplicity one hypothesis
$
\dim_{\mathbb T^D(M)/\mathfrak m}J^D(M)[\mathfrak m]=2,
$
the localized intersection and congruence ideals coincide, and hence the local
modular and congruence exponents agree. Without multiplicity one, however, the
two exponents need not be equal. Since the congruence ideal is contained in
the intersection ideal, the modular exponent divides the congruence exponent.
It is therefore natural to ask whether the discrepancy between them can be
measured in terms of the failure of multiplicity one. The purpose of this
paper is to give such a quantitative measure.
Write
$
\widehat{\mathbb T^D(M)_{\mathfrak m}}
\cong
(\mathbb T^D(M)\otimes_{\mathbb Z}\mathbb Z_\ell)_{\mathfrak m}
$
for the completed local Hecke algebra, and put
$
L=T_{\mathfrak m}(J^D(M)),
$
where \(T_{\mathfrak m}(J^D(M))\) denotes the
covariant \(\mathfrak m\)-adic Tate module, naturally
a finite \(\widehat{\mathbb T^D(M)_{\mathfrak m}}\)-module.
We define the \emph{multiplicity one defect}
at \(\mathfrak m\) by
\[
\delta_{\mathrm{def}}(\mathfrak m)
=
\min_V \exp(L/V),
\]
where \(V\) ranges over all free \(\widehat{\mathbb T^D(M)_{\mathfrak m}}\)-submodules
of \(L\) of rank \(2\) and finite index. Thus
\(\delta_{\mathrm{def}}(\mathfrak m)\) measures how
far the integral Tate module is from being free
of rank \(2\) over the completed local Hecke
algebra.
 The existence of such lattices, and hence the well-definedness of
\(\delta_{\mathrm{def}}(\mathfrak m)\), will be established below. In the
multiplicity one case, \(L\) is free of rank \(2\) over \(\widehat{\mathbb T^D(M)_{\mathfrak m}}\), so one may take
\(V=L\) and obtain
\[
\delta_{\mathrm{def}}(\mathfrak m)=1.
\]
Our main result shows that this defect directly controls the discrepancy
between the local congruence and modular exponents. Let
\(r_{\mathfrak m}^{D}(M)\) and \(\delta_{\mathfrak m}^{D}(M)\) denote the
local congruence and modular exponents, respectively.

\medskip

\noindent\textbf{Theorem A.}
For every maximal ideal \(\mathfrak m\) of \(\mathbb T^D(M)\),
\[
\frac{r_{\mathfrak m}^{D}(M)}
{\delta_{\mathfrak m}^{D}(M)}
\leq
\delta_{\mathrm{def}}(\mathfrak m).
\]

\medskip
Thus the failure of equality between the local congruence and modular
exponents is controlled by the failure of the localized Tate module to be a
free rank-\(2\) module over the localized Hecke algebra. When multiplicity one
holds, \(\delta_{\mathrm{def}}(\mathfrak m)=1\), and Theorem~A recovers
\[
r_{\mathfrak m}^{D}(M)=\delta_{\mathfrak m}^{D}(M).
\]

We next address the existence of the lattices appearing in the definition of
\(\delta_{\mathrm{def}}(\mathfrak m)\). In fact, we prove more: we construct
such a lattice explicitly and obtain a quantitative upper bound for the
multiplicity one defect. It is worth remarking that the existence of such a
lattice already follows from the characteristic-zero rank-two freeness
\[
L\otimes_{\mathbb Z_\ell}\mathbb Q_\ell
\cong
\left(\widehat{\mathbb T^D(M)_{\mathfrak m}}\otimes_{\mathbb Z_\ell}\mathbb Q_\ell\right)^{\oplus 2}
\]
(see, for instance, Section~4 of \cite{helm2007maps}). Our construction,
however, is explicit and provides an upper bound for the defect.

For each \(g\), put
$
\mathcal A_{g,\mathfrak m}
=
\mathcal A_g\otimes_{\mathbb T^D(M)}\widehat{\mathbb T^D(M)_{\mathfrak m}}.
$
Let
\[
A_{\mathfrak m}
=
\prod_{I_g\subseteq\mathfrak m}
\mathcal A_{g,\mathfrak m},
\qquad
c_{\mathfrak m}
=
\exp(A_{\mathfrak m}/\widehat{\mathbb T^D(M)_{\mathfrak m}}),
\]
where \(\mathcal A_g\) is the image of the
full level-\(M\) Hecke algebra on the
\(g\)-old eigenspace and
$
I_g=\ker(\mathbb T^D(M)\to\mathcal A_g).
$
For each \(g\) with \(I_g\subseteq\mathfrak m\), put
\[
T_g=\widehat{\mathbb T^D(M)_{\mathfrak m}}/I_g\widehat{\mathbb T^D(M)_{\mathfrak m}}
\cong
(\mathbb T^D(M)/I_g)
\otimes_{\mathbb T^D(M)}\widehat{\mathbb T^D(M)_{\mathfrak m}}.
\]
Let
$
L_g=L[I_g\widehat{\mathbb T^D(M)_{\mathfrak m}}]
$
be the corresponding integral Tate-module component.
We construct a free \(T_g\)-submodule
$
V_g\subseteq L_g
$
of rank \(2\) and finite index, and set
\[
e_{\mathfrak m}
=
\operatorname{lcm}_{I_g\subseteq\mathfrak m}
\exp(L_g/V_g).
\]

\medskip

\noindent\textbf{Theorem B.}
There exists a free \(\widehat{\mathbb T^D(M)_{\mathfrak m}}\)-submodule
$
V\subset T_{\mathfrak m}(J^D(M))
$
of rank \(2\) such that the exponent of the abelian group
$
T_{\mathfrak m}(J^D(M))/V
$
is at most \(c_{\mathfrak m}^{\,2}e_{\mathfrak m}\). Consequently,
\[
\delta_{\mathrm{def}}(\mathfrak m)
\leq
c_{\mathfrak m}^{\,2}e_{\mathfrak m}.
\]

\medskip

Theorem~B shows, in particular, that the collection of lattices
occurring in the definition of \(\delta_{\mathrm{def}}(\mathfrak m)\)
is nonempty. Its proof uses the characteristic-zero decomposition
of the local Hecke action into newform and oldform components.
The construction treats new and old components uniformly
by using the characteristic-zero rank-two freeness of the
Tate module over the corresponding Hecke algebra on each
component, as used in the proof of  Lemma~\ref{lem:free-hecke-factor-lattices}.

The bound in Theorem~B separates two sources of failure of integral
multiplicity one: the congruence gluing between the different
characteristic-zero Hecke eigenspaces, measured by \(c_{\mathfrak m}\), and
the discrepancy between the integral Tate-module lattice and the free lattice
inside each individual eigenspace, measured by \(e_{\mathfrak m}\).

To prove Theorem~A, let \(V\subset L\) be any free
\(\mathbb T^D(M)_{\mathfrak m}\)-submodule of rank \(2\) and finite index.
Associated with \(V\) is an abelian variety \(J^D(M)_V\),
Hecke-equivariantly isogenous to \(J^D(M)\), for which
\[
T_{\mathfrak m}(J^D(M)_V)
\simeq
V
\simeq
\widehat{\mathbb T^D(M)_{\mathfrak m}}^{\,2}.
\]
Multiplicity one therefore holds integrally on the modified Tate module. The
congruence and modular exponents agree for the corresponding factor of
\(J^D(M)_V\), and comparison through the isogeny gives
\[
\frac{r_{\mathfrak m}^{D}(M)}
{\delta_{\mathfrak m}^{D}(M)}
\leq
\exp(L/V).
\]
Since this holds for every such lattice \(V\), minimizing over \(V\) gives
Theorem~A. We also give a second proof in terms of the saturation of the Hecke
algebra in the endomorphism algebra.

It is natural to compare multiplicity one with another important local
property of Hecke algebras, namely Gorensteinness. These properties are often
closely related in arithmetic applications, and failure of the Gorenstein
property is frequently accompanied by failure of multiplicity one.
Nevertheless, the converse implication does not hold in general. In
Section~4 we give an explicit example at level \(126\).

Let \(E=126\mathrm{A}1\) in Cremona notation, equivalently the optimal curve
\(126.b5\) in the LMFDB, and let \(\mathfrak m\) be the maximal ideal of
residue characteristic \(3\) associated with its newform. The modular degree
of \(E\) is \(8\), whereas its congruence number is \(24\); hence multiplicity
one fails at \(\mathfrak m\). Writing \(\mathbb T=\mathbb T^1(126)\), we
nevertheless show that the \(\mathfrak m\)-adic completion of the local Hecke
algebra is
\[
\widehat{\mathbb T^D(M)_{\mathfrak m}}
\simeq
\mathbf Z_3\times_{\mathbf F_3}\mathbf Z_3[\sqrt 3]
\simeq
\mathbf Z_3[X]/(X^3-3X).
\]
This is a complete intersection and therefore Gorenstein. Since a Noetherian
local ring is Gorenstein if and only if its completion is Gorenstein, it
follows that \(\mathbb T_{\mathfrak m}\) is Gorenstein as well. Thus
Gorenstein does not imply multiplicity one. We also compute for this example that $\delta_{\mathrm{def}}(\mathfrak{m})=3$.

The paper is organized as follows. In Section~2 we construct the free Hecke
lattices appearing in the definition of the multiplicity one defect, prove
Theorem~B, and obtain the explicit bound
\[
\delta_{\mathrm{def}}(\mathfrak m)
\leq
c_{\mathfrak m}^{\,2}e_{\mathfrak m}.
\]
In Section~3 we associate abelian varieties to free Hecke lattices, compare
their congruence and modular exponents, and prove the main result,
Theorem~A, by two different arguments. Finally, in Section~4 we study the
relation between multiplicity one and Gorensteinness and give the explicit
level-\(126\) example described above.

%% file: Defect.tex
\section{Multiplicity One Defect}\label{sec:multiplicity-defect}

In this section we construct free Hecke lattices inside the local Tate module
and prove Theorem~B. The construction uses the characteristic-zero rank-two
property recalled in the introduction; in particular, it does not require an
integral decomposition into degeneracy copies of an oldform.

Let \(\mathbb T=\mathbb T^D(M)\), let \(\mathfrak m\) be a maximal ideal of
\(\mathbb T\) of residue characteristic \(\ell\), and put
\[
T=\widehat{\mathbb T_{\mathfrak m}}
\cong (\mathbb T\otimes_{\mathbb Z}\mathbb Z_\ell)_{\mathfrak m},
\qquad
L=T_{\mathfrak m}(J^D(M)).
\]
Here \(T_{\mathfrak m}(J^D(M))\) is the covariant
\(\mathfrak m\)-adic Tate module, and \(L\) is a finite, torsion-free
\(\mathbb Z_\ell\)-module with a natural \(T\)-action.
We define the \emph{multiplicity one defect} at \(\mathfrak m\) by
\[
\delta_{\mathrm{def}}(\mathfrak m)
=\min_{\substack{V\subseteq L\\V\simeq T^2\\\lvert L/V\rvert<\infty}}
\exp(L/V).
\]
The proof below shows that the family over which the minimum is taken is
nonempty.

\subsection{Lattices in Hecke components}

Let \(g\) run over representatives of the primitive Hecke eigensystems of
exact levels \(Z_g\mid M\) whose oldspaces occur at level \(M\). Write
\(\mathcal A_g\) for the image of the full level-\(M\) Hecke algebra
\(\mathbb T\) on the entire \(g\)-oldspace, and set
$
I_g=\ker(\mathbb T\longrightarrow\mathcal A_g).
$
Only the factors for which \(I_g\subseteq\mathfrak m\) contribute after
localization. Denote this set of indices by
$
\mathcal G_{\mathfrak m}=
\{g:I_g\subseteq\mathfrak m\},
$
and, for \(g\in\mathcal G_{\mathfrak m}\), put
\[
T_g=T/I_gT
\cong\widehat{(\mathcal A_g)_{\mathfrak m}},
\qquad
L_g=L[I_gT]
=\{x\in L:(I_gT)x=0\}.
\]
We use the characteristic-zero decomposition of the Hecke action into its
primitive oldspace factors and the rational rank-two property
\begin{equation}\label{eq:rational-hecke-rank-two}
	T\otimes_{\mathbb Z_\ell}\mathbb Q_\ell
	\;\xrightarrow{\ \sim\ }
	\prod_{g\in\mathcal G_{\mathfrak m}}
	(T_g\otimes_{\mathbb Z_\ell}\mathbb Q_\ell),
	\qquad
	L\otimes_{\mathbb Z_\ell}\mathbb Q_\ell
	\;\simeq\;
	(T\otimes_{\mathbb Z_\ell}\mathbb Q_\ell)^2.
\end{equation}
The first isomorphism in \eqref{eq:rational-hecke-rank-two} gives a natural
injection of \(\mathbb Z_\ell\)-orders
\begin{equation}\label{eq:hecke-product-inclusion}
	T\hookrightarrow A_{\mathfrak m},
	\qquad
	A_{\mathfrak m}
	:=\prod_{g\in\mathcal G_{\mathfrak m}}T_g,
\end{equation}
whose cokernel is finite. Define
$
c_{\mathfrak m}=\exp(A_{\mathfrak m}/T).
$
Thus \(c_{\mathfrak m}A_{\mathfrak m}\subseteq T\).

\begin{lem}\label{lem:free-hecke-factor-lattices}
	For every \(g\in\mathcal G_{\mathfrak m}\), there exist
	\(x_g,y_g\in L_g\) such that
	\[
	V_g=T_gx_g+T_gy_g\subseteq L_g
	\]
	is free of rank \(2\) over \(T_g\) and \(L_g/V_g\) is finite.
\end{lem}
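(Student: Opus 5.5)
The plan is to reduce the statement to the rational rank-two property \eqref{eq:rational-hecke-rank-two}. First I identify $L_g\otimes_{\mathbb Z_\ell}\mathbb Q_\ell$ as a free module of rank $2$ over $T_g\otimes_{\mathbb Z_\ell}\mathbb Q_\ell$. Then I choose $x_g,y_g$ integrally and show that the $T_g$-span they generate is free and of finite index.

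\textbf{Step 1: the rational component.} Write $K=T\otimes_{\mathbb Z_\ell}\mathbb Q_\ell$ and $K_g=T_g\otimes_{\mathbb Z_\ell}\mathbb Q_\ell$. By the first isomorphism in \eqref{eq:rational-hecke-rank-two}, $K\cong\prod_{g}K_g$. Let $e_g\in K$ be the idempotent cutting out $K_g$. Then $I_gK=(1-e_g)K$, so the annihilator of $I_gK$ in $K^2$ is $e_gK^2=K_g^2$.

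\textbf{Step 2: passing back to $L_g$.} Since $L$ is $\mathbb Z_\ell$-torsion free, $L_g=L\cap (L\otimes\mathbb Q_\ell)[I_gT]$. This follows because $I_gT$ is finitely generated: an element of $L$ killed by $I_gT$ after tensoring with $\mathbb Q_\ell$ is already killed by it in $L$. Consequently $L_g\otimes\mathbb Q_\ell=(L\otimes\mathbb Q_\ell)[I_g]$, which by Step 1 and the second isomorphism in \eqref{eq:rational-hecke-rank-two} is free of rank $2$ over $K_g$. Moreover $L_g$ is a saturated $\mathbb Z_\ell$-lattice in it, and it is a $T_g$-module because $I_gT$ acts trivially on it.

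\textbf{Step 3: choosing generators.} Choose a $K_g$-basis $u,v$ of $L_g\otimes\mathbb Q_\ell$. Multiply by a suitable power $\ell^n$ so that $x_g=\ell^nu$ and $y_g=\ell^nv$ lie in $L_g$; this is possible since $L_g$ is a lattice spanning this space. Put $V_g=T_gx_g+T_gy_g$.

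To see that $V_g$ is free, consider a relation $ax_g+by_g=0$ with $a,b\in T_g$. It gives $a\ell^n u+b\ell^n v=0$ in $L_g\otimes\mathbb Q_\ell$, hence $a=b=0$ in $K_g$. Since $T_g$ is a finite free, hence torsion-free, $\mathbb Z_\ell$-module, the map $T_g\to K_g$ is injective. This is the one point needing care: $T_g=T/I_gT$ should have no $\ell$-torsion. I would get this by identifying $T_g$ with $\widehat{(\mathcal A_g)_{\mathfrak m}}$ as stated. Here $\mathcal A_g$ is $\mathbb Z$-free, being a subring of the endomorphisms of a lattice, and its localization and completion are direct factors of $\mathcal A_g\otimes\mathbb Z_\ell$, hence $\mathbb Z_\ell$-free. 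Therefore $a=b=0$ and $V_g\cong T_g^2$.

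\textbf{Step 4: finite index.} $V_g\otimes\mathbb Q_\ell=K_gu+K_gv=L_g\otimes\mathbb Q_\ell$. Both $V_g$ and $L_g$ are finitely generated $\mathbb Z_\ell$-lattices of the same rank with $V_g\subseteq L_g$, so $L_g/V_g$ is finite.

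The main obstacle I expect is the bookkeeping in Steps 1--2. Specifically, one must check that the rational idempotent decomposition is compatible with taking $I_g$-torsion, so that $L_g$ has full rank in a rank-two $K_g$-module, together with the torsion-freeness of $T_g$. The rest is routine.
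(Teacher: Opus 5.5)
Your proposal is correct and follows essentially the same route as the paper. You identify $L_g\otimes\mathbb Q_\ell$ with $(T_g\otimes\mathbb Q_\ell)^2$ via the idempotent decomposition, scale a rational basis by a power of $\ell$ into $L_g$, and then use $\mathbb Z_\ell$-torsion-freeness of $T_g$ for injectivity and a rank comparison for finite index. Your added argument that $T_g$ is $\mathbb Z_\ell$-free, as a direct factor of $\mathcal A_g\otimes\mathbb Z_\ell$, fills in a point the paper only asserts; the appeal to finite generation of $I_gT$ in Step~2 is unnecessary, since torsion-freeness of $L$ alone gives $L_g\otimes\mathbb Q_\ell=(L\otimes\mathbb Q_\ell)[I_gT]$.
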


\begin{proof}
	Under the decomposition \eqref{eq:rational-hecke-rank-two}, the
	\(I_gT\)-annihilated rational submodule is precisely the \(g\)-factor.
	Since \(L\) is \(\mathbb Z_\ell\)-torsion-free, clearing denominators
	shows that localization commutes with taking this annihilator. Consequently,
	\[
	L_g\otimes_{\mathbb Z_\ell}\mathbb Q_\ell
	\simeq
	\bigl(T_g\otimes_{\mathbb Z_\ell}\mathbb Q_\ell\bigr)^2.
	\]
	Choose \(u_g,v_g\) corresponding to the two standard basis vectors under
	such an isomorphism. For some \(a\geq0\), both
	$
	x_g=\ell^a u_g,
	$and
	$
	y_g=\ell^a v_g
	$
	belong to \(L_g\). The \(T_g\)-linear map
	\[
	T_g^2\longrightarrow L_g,
	\qquad
	(s,t)\longmapsto sx_g+ty_g
	\]
	becomes an isomorphism after tensoring with \(\mathbb Q_\ell\).
	Because \(T_g\) is \(\mathbb Z_\ell\)-torsion-free, the map is
	injective. Its cokernel is a finitely generated torsion
	\(\mathbb Z_\ell\)-module and hence finite. Its image is the required
	lattice \(V_g\).
\end{proof}
Fix the lattices \(V_g\) given by Lemma~\ref{lem:free-hecke-factor-lattices}
and define
\[
e_{\mathfrak m}
=\operatorname{lcm}_{g\in\mathcal G_{\mathfrak m}}
\exp(L_g/V_g).
\]
The integer \(e_{\mathfrak m}\) depends on the chosen factor lattices.
The bound in Theorem~B holds for every such choice.

\subsection{Construction of the global lattice}
\begin{proof}[Proof of Theorem~B, Part~1]
	Choose \(x_g,y_g\) as in Lemma~\ref{lem:free-hecke-factor-lattices}
	and put
	\[
	X=\sum_{g\in\mathcal G_{\mathfrak m}}x_g,
	\qquad
	Y=\sum_{g\in\mathcal G_{\mathfrak m}}y_g,
	\qquad
	V=TX+TY\subseteq L.
	\]
	The rational direct-product decomposition in
	\eqref{eq:rational-hecke-rank-two} implies that the sum of the \(L_g\)
	is direct inside \(L\). Indeed, any relation among elements of these
	submodules becomes zero in each separate rational factor; torsion-freeness
	then gives the integral assertion.
	Suppose that \(tX+sY=0\) for \(t,s\in T\). Its \(g\)-component is
	\[
	\overline t_gx_g+\overline s_gy_g=0,
	\qquad
	\overline t_g,\overline s_g\in T_g.
	\]
	Since \(x_g,y_g\) freely generate \(V_g\), we obtain
	\(\overline t_g=\overline s_g=0\) for every \(g\). The injection
	\eqref{eq:hecke-product-inclusion} now gives \(t=s=0\). Hence
	\(V\simeq T^2\).
	After tensoring with \(\mathbb Q_\ell\), the central idempotents of
	\(T\otimes\mathbb Q_\ell\) select the \(g\)-components of \(X\)
	and \(Y\). Lemma~\ref{lem:free-hecke-factor-lattices} therefore gives
	\[
	V\otimes_{\mathbb Z_\ell}\mathbb Q_\ell
	=L\otimes_{\mathbb Z_\ell}\mathbb Q_\ell.
	\]
	Thus \(L/V\) is finite, proving the first part of Theorem~B.
\end{proof}

\subsection{The congruence bound}
\begin{proof}[Proof of Theorem~B, Part~2]
	Set
	\[
	L^{\mathrm{dec}}=
	\bigoplus_{g\in\mathcal G_{\mathfrak m}}L_g,
	\qquad
	V^{\mathrm{dec}}=
	\bigoplus_{g\in\mathcal G_{\mathfrak m}}V_g.
	\]
	These are submodules of \(L\), and
	\(V^{\mathrm{dec}}\subseteq L^{\mathrm{dec}}\subseteq L\).
	
	For \(g\in\mathcal G_{\mathfrak m}\), let \(\varepsilon_g\in A_{\mathfrak m}\)
	be the idempotent that is \(1\) in the \(g\)-factor and \(0\) in
	all other factors. Since
	\(c_{\mathfrak m}A_{\mathfrak m}\subseteq T\), the element
	\(c_{\mathfrak m}\varepsilon_g\) belongs to \(T\).
	For any \(z\in L\),
	$
	c_{\mathfrak m}z
	=\sum_{g\in\mathcal G_{\mathfrak m}}
	(c_{\mathfrak m}\varepsilon_g)z.
	$
	Moreover, each summand lies in \(L_g\): if \(u\in I_gT\), then
	\(u(c_{\mathfrak m}\varepsilon_g)=0\) in \(T\), as can be checked
	in the product ring \(A_{\mathfrak m}\). Hence
	\begin{equation}\label{eq:first-lattice-containment}
		c_{\mathfrak m}L\subseteq L^{\mathrm{dec}}.
	\end{equation}
	By the definition of \(e_{\mathfrak m}\),
	\begin{equation}\label{eq:second-lattice-containment}
		e_{\mathfrak m}L^{\mathrm{dec}}\subseteq V^{\mathrm{dec}}.
	\end{equation}
	Because each \(V_g\simeq T_g^2\), the chosen generators induce
	an isomorphism
	\[
	A_{\mathfrak m}^2\xrightarrow{\ \sim\ }V^{\mathrm{dec}},
	\qquad
	\bigl((a_g)_g,(b_g)_g\bigr)
	\longmapsto\sum_g(a_gx_g+b_gy_g).
	\]
	Under this isomorphism, the lattice \(V=TX+TY\) corresponds exactly
	to \(T^2\subseteq A_{\mathfrak m}^2\). It follows that
	$
	V^{\mathrm{dec}}/V
	\simeq(A_{\mathfrak m}/T)^2,
	$
	and consequently
	\begin{equation}\label{eq:third-lattice-containment}
		c_{\mathfrak m}V^{\mathrm{dec}}\subseteq V.
	\end{equation}
	Combining \eqref{eq:first-lattice-containment},
	\eqref{eq:second-lattice-containment}, and
	\eqref{eq:third-lattice-containment} gives
	$
	c_{\mathfrak m}^{\,2}e_{\mathfrak m}L\subseteq V.
	$
	Therefore
	$
	\delta_{\mathrm{def}}(\mathfrak m)\leq\exp(L/V)
    \leq c_{\mathfrak m}^{\,2}e_{\mathfrak m},
	$
	as required.
\end{proof}

%% file: TheoremB.tex
\section{Exponents via Isogeny}

In this section, we prove Theorem~A. We first prepare the setup.

Fix a maximal ideal \(\mathfrak m\) of \(\mathbb T^D(M)\) of residue characteristic \(\ell\), and put
\[
T=\widehat{(\mathbb T^D(M))_{\mathfrak m}}
\cong (\mathbb T^D(M)\otimes_{\mathbb Z}\mathbb Z_\ell)_{\mathfrak m}.
\]
For any finite \(\mathbb T^D(M)\)-algebra \(\mathcal E\), its \(\mathfrak m\)-local
factor in this section means \(\mathcal E\otimes_{\mathbb T^D(M)}T\).
For finite-length Hecke modules, ordinary localization and completion agree.

Throughout, for an abelian variety \(J\) defined over a number field $K$, we write
\(\operatorname{End}(J)\) for the ring of endomorphisms of $J_{\bar{\mathbb{Q}}}$. We also write
\(\operatorname{End}(J)_K\) for the subring of endomorphisms defined over
\(K\). If a ring \(\mathbb T\) acts on \(J\), we write
\[
\operatorname{End}_{\mathbb T}(J)
=
\{\phi\in\operatorname{End}(J):\phi t=t\phi
\text{ for all }t\in\mathbb T\}
\]
for the subring of \(\overline{\mathbb Q}\)-endomorphisms commuting with the
\(\mathbb T\)-action.

\subsection{Abelian Varieties Associated to Lattices}

Write \(L=\mathcal T_{\mathfrak m}(J^D(M))\). A \emph{lattice} will mean a
free \(T\)-submodule \(V\subset L\) of rank \(2\)
and finite index. 
Composing the canonical principal polarization of $J^D(M)$
with the Atkin--Lehner involution yields a
Hecke-equivariant identification of its covariant and
contravariant $\mathfrak m$-adic Tate modules.
We therefore regard $L$ as the contravariant
$\mathfrak m$-adic Tate module.

There is a natural construction of an abelian variety \(J^D(M)_V\), equipped
with a faithful action of \(\mathbb T^D(M)\), which is
\(\mathbb T^D(M)\)-equivariantly isogenous to \(J^D(M)\) and whose
contravariant \(\mathfrak m\)-adic Tate module identifies with \(V\).

Let \(\ell\) be the residue characteristic of \(\mathfrak m\). Write
\(J^D(M)[\mathfrak m^\infty]\) for the \(\mathfrak m\)-divisible group of
\(J^D(M)(\overline{\mathbb Q})\) and \(\mu_{\ell^\infty}\) for the
\(\ell\)-divisible group of \(\overline{\mathbb Q}^{\times}\). By definition,
\[
\mathcal T_{\mathfrak m}(J^D(M))
=
\operatorname{Hom}\!\left(
J^D(M)[\mathfrak m^\infty],\mu_{\ell^\infty}
\right),
\]
which gives a natural nondegenerate pairing
\[
\mathcal T_{\mathfrak m}(J^D(M))
\times
J^D(M)[\mathfrak m^\infty]
\longrightarrow
\mu_{\ell^\infty}.
\]
Let \(G\subset J^D(M)[\mathfrak m^\infty]\) be the annihilator of \(V\)
under this pairing. Since \(V\) has finite index in
\(\mathcal T_{\mathfrak m}(J^D(M))\), the subgroup \(G\) is finite and
\(\mathbb T^D(M)\)-stable. Define \(J^D(M)_V:=J^D(M)/G\). The quotient map
$
\pi:J^D(M)\longrightarrow J^D(M)_V
$
is \(\mathbb T^D(M)\)-equivariant, so \(J^D(M)_V\) inherits a natural
\(\mathbb T^D(M)\)-action. Since \(G\) is finite, it is defined over a
finite extension \(K/\mathbb Q\), and hence so is \(J^D(M)_V\). By
construction,
\[
\mathcal T_{\mathfrak m}(J^D(M)_V)
\cong
V
\subset
\mathcal T_{\mathfrak m}(J^D(M)).
\]

\begin{prop}\label{prop:dual-tate-module}
	The covariant \(\mathfrak m\)-adic Tate module of \(J^D(M)_V^{\vee}\),
	denoted by
	\(\widetilde{\mathcal T}_{\mathfrak m}(J^D(M)_V^{\vee})\), is isomorphic to
	$
	\mathcal T_{\mathfrak m}(J^D(M)_V)\cong V
    $
	as a \(T\)-module.
\end{prop}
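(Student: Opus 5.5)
The plan is to use Weil pairing duality between an abelian variety and its dual, together with the Hecke-equivariant identification of covariant and contravariant Tate modules. For any abelian variety \(B\) with a \(\mathbb T^D(M)\)-action, the Weil pairing \(B[\ell^n]\times B^\vee[\ell^n]\to\mu_{\ell^n}\) is perfect and Galois-equivariant. In the limit it identifies the covariant Tate module \(T_\ell(B^\vee)\) with \(\operatorname{Hom}(B[\ell^\infty],\mu_{\ell^\infty})\), which is the contravariant Tate module of \(B\). Under this pairing, \(t\in\mathbb T^D(M)\) acting on \(B\) is adjoint to \(t^\vee\) acting on \(B^\vee\). So the first step is to check that this identification is \(T\)-linear, once \(\mathbb T^D(M)\) acts on \(B^\vee\) through the dual action. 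Here I mean the action \(t\mapsto t^\vee\), which is a ring action because \(\mathbb T^D(M)\) is commutative.

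Second, I localize at \(\mathfrak m\). The group \(B[\ell^\infty]\) decomposes as \(\bigoplus_{\mathfrak n\mid\ell}B[\mathfrak n^\infty]\) via the idempotents of \(\mathbb T^D(M)\otimes\mathbb Z_\ell\). The adjointness shows that the pairing restricts to a perfect pairing between \(B[\mathfrak m^\infty]\) and \(B^\vee[\mathfrak m^\infty]\), taking \(B^\vee\) with the dual Hecke action. Distinct local pieces pair trivially, since the idempotent \(e_{\mathfrak m}\) is self-adjoint. This gives
\[
\widetilde{\mathcal T}_{\mathfrak m}(B^\vee)\cong\operatorname{Hom}(B[\mathfrak m^\infty],\mu_{\ell^\infty})=\mathcal T_{\mathfrak m}(B)
\]
as \(T\)-modules.

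Third, I apply this to \(B=J^D(M)_V\) and combine it with the construction \(\mathcal T_{\mathfrak m}(J^D(M)_V)\cong V\) given just before the statement. Since \(G\) is only defined over \(K\), everything should be done over \(\overline{\mathbb Q}\), or over \(K\) if Galois structure is wanted. The proposition only asks for a \(T\)-module isomorphism, so the Galois action can be ignored.

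The main obstacle is the Hecke action on the dual. A priori \(J^D(M)_V^\vee\) carries the transposed action \(t\mapsto t^\vee\). To compare it with the natural action, I would use the isogeny \(\pi\). The dual isogeny \(\pi^\vee\colon J^D(M)_V^\vee\to J^D(M)^\vee\) intertwines the dual actions. On \(J^D(M)^\vee\cong J^D(M)\), the principal polarization composed with the Atkin--Lehner involution makes \(t^\vee\) correspond to \(t\), as recalled in the paper. I would then check that the transported action is compatible with the identification of \(\widetilde{\mathcal T}_{\mathfrak m}(J^D(M)_V^\vee)\) with a finite-index submodule of \(\widetilde{\mathcal T}_{\mathfrak m}(J^D(M)^\vee)\cong L\). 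That submodule should be exactly the image of \(V\). Even if one only tracks the transposed action, commutativity means it is still a \(T\)-module structure, and the adjunction gives the isomorphism with \(V\) for that structure.
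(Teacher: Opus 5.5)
Your proposal is correct and follows essentially the same route as the paper. Both arguments use the perfect Weil pairing with \(\langle tx,y\rangle=\langle x,t^\vee y\rangle\), then the orthogonality of the pieces cut out by the idempotents \(\varepsilon_{\mathfrak n}\), \(\mathfrak n\neq\mathfrak m\), to get a perfect pairing on \(\mathfrak m\)-parts, and finally the identification \(\mathcal T_{\mathfrak m}(J^D(M)_V)\cong V\) from the construction of \(J^D(M)_V\). Like the paper, you let \(T\) act on the dual through \(t\mapsto t^\vee\), so your closing comparison via \(\pi^\vee\) and the Atkin--Lehner twist is a correct consistency check rather than a required step.
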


\begin{proof}
	Let
	\[
	\langle\cdot,\cdot\rangle:
	\mathcal T_\ell(J^D(M)_V)
	\times
	\mathcal T_\ell(J^D(M)_V^{\vee})
	\longrightarrow
	\mathbb Z_\ell(1)
	\]
	be the perfect Weil pairing. For $t \in \mathbb{T}^D(M)$, let $t^{\vee}$ be the endomorphism of $J^D(M)^{\vee}_V$ induced by dualization. One has
    $
	\langle tx,y\rangle
	=
	\langle x,t^{\vee}y\rangle
	$
	for every \(x,y\).
	In the decomposition
	\[
	(\mathbb T^D(M)\otimes_{\mathbb Z}\mathbb Z_\ell)
	=
	\prod_{\mathfrak n\mid\ell}
	\widehat{(\mathbb T^D(M))_{\mathfrak n}},
	\]
	let \(\varepsilon_{\mathfrak m}\) be the idempotent corresponding to
	\(T\). Since
	\(\varepsilon_{\mathfrak n}\varepsilon_{\mathfrak m}=0\) for
	\(\mathfrak n\neq\mathfrak m\), the Weil pairing restricts to a
	perfect pairing on the \(\mathfrak m\)-parts. Equivalently, the covariant
	\(\mathfrak m\)-adic Tate module of the dual is identified with the
	contravariant \(\mathfrak m\)-adic Tate module of \(J^D(M)_V\). Hence
	\[
	\widetilde{\mathcal T}_{\mathfrak m}(J^D(M)_V^{\vee})
	\cong
	\mathcal T_{\mathfrak m}(J^D(M)_V)
	\cong
	V,
	\]
	compatibly with the Hecke action.
\end{proof}

\subsection{Congruence and Modular Exponents}

Let $f\in S_2^{\text{new}}(\Gamma_0^D(M))$ be a quaternionic newform of weight 2 and level $M$. Let \(V\) be a lattice in the sense of Section 3.1, and write \(A=A_f\) for the modular abelian subvariety of $J^D(M)$ attached to $f$. Choose a
\(\mathbb T^D(M)\)-stable abelian subvariety \(B\) complementary to \(A\), so
that
$
J^D(M)=A+B
$
and $A \cap B$ is finite.
The congruence and modular exponents of \(A\) are defined as in
\cite[Section~3]{agashe2011modular}.

\begin{Def}
	Let
	\[
	\pi_A:\mathbb T^D(M)\longrightarrow\operatorname{End}(A),
	\qquad
	\pi_B:\mathbb T^D(M)\longrightarrow\operatorname{End}(B),
	\]
	and write
	\(\mathbb T^D(M)_A=\pi_A(\mathbb T^D(M))\) and
	\(\mathbb T^D(M)_B=\pi_B(\mathbb T^D(M))\).
	
	\begin{enumerate}[label=(\alph*),start=\intfromalph{i}]
		\item
		The congruence ideal attached to \(A\) is
		\(R:=\pi_A(\ker(\pi_B))\), and the congruence exponent is
		\[
		r^D(M)=\exp\!\left(\mathbb T^D(M)_A/R\right).
		\]
		
		\item
		The intersection ideal attached to \(A\) is
		\(S:=\operatorname{Ann}_{\mathbb T^D(M)_A}(A\cap B)\), and the modular
		exponent is
		\[
		\delta^D(M)=\exp\!\left(\mathbb T^D(M)_A/S\right).
		\]
	\end{enumerate}
\end{Def}

Let \(A_V\) and \(B_V\) be the images of \(A\) and \(B\) under
\(\pi:J^D(M)\to J^D(M)_V\). Then
\[
J^D(M)_V=A_V+B_V.
\]
Let \(I=\ker(\pi_A)\), and define
\[
A_V^{\vee}:=(J^D(M)_V/I)^{\vee},
\qquad
B_V^{\vee}:=I\,J^D(M)_V^{\vee}.
\]
Then
\[
J^D(M)_V^{\vee}=A_V^{\vee}+B_V^{\vee}.
\]
These varieties are \(\mathbb T^D(M)\)-stable, so we define their
congruence and modular exponents analogously.

\begin{Def}
	Let
	\[
	\pi_{A_V}:\mathbb T^D(M)\to\operatorname{End}(A_V),
	\qquad
	\pi_{B_V}:\mathbb T^D(M)\to\operatorname{End}(B_V),
	\]
	and similarly define \(\pi_{A_V^\vee}\) and \(\pi_{B_V^\vee}\). Write
	\[
	\mathbb T^D(M)_{A_V}=\pi_{A_V}(\mathbb T^D(M)),
	\qquad
	\mathbb T^D(M)_{B_V}=\pi_{B_V}(\mathbb T^D(M)),
	\]
	with analogous notation for \(A_V^\vee\) and \(B_V^\vee\).
	
	\begin{enumerate}[label=(\alph*),start=\intfromalph{i}]
		\item
		\[
		R_V:=\pi_{A_V}(\ker(\pi_{B_V})),
		\qquad
		r(A_V)=\exp\!\left(\mathbb T^D(M)_{A_V}/R_V\right).
		\]
		
		\item
		\[
		S_V:=\operatorname{Ann}_{\mathbb T^D(M)_{A_V}}(A_V\cap B_V),
		\qquad
		\delta(A_V)=\exp\!\left(\mathbb T^D(M)_{A_V}/S_V\right).
		\]
		
		\item
		\[
		R_V^\vee:=\pi_{A_V^\vee}(\ker(\pi_{B_V^\vee})),
		\qquad
		r(A_V^\vee)
		=
		\exp\!\left(\mathbb T^D(M)_{A_V^\vee}/R_V^\vee\right).
		\]
		
		\item
		\[
		S_V^\vee
		:=
		\operatorname{Ann}_{\mathbb T^D(M)_{A_V^\vee}}
		(A_V^\vee\cap B_V^\vee),
		\qquad
		\delta(A_V^\vee)
		=
		\exp\!\left(\mathbb T^D(M)_{A_V^\vee}/S_V^\vee\right).
		\]
	\end{enumerate}
\end{Def}

Denote by
\[
r_{\mathfrak m}^D(M),\quad
r_{\mathfrak m}(A_V),\quad
r_{\mathfrak m}(A_V^\vee),\quad
\delta_{\mathfrak m}(A),\quad
\delta_{\mathfrak m}(A_V),\quad
\delta_{\mathfrak m}(A_V^\vee)
\]
the exponents of, respectively,
\[
\begin{gathered}
	(\mathbb T^D(M)_A/R)_{\mathfrak m},\qquad
	(\mathbb T^D(M)_{A_V}/R_V)_{\mathfrak m},\qquad
	(\mathbb T^D(M)_{A_V^\vee}/R_V^\vee)_{\mathfrak m},\\
	(\mathbb T^D(M)_A/S)_{\mathfrak m},\qquad
	(\mathbb T^D(M)_{A_V}/S_V)_{\mathfrak m},\qquad
	(\mathbb T^D(M)_{A_V^\vee}/S_V^\vee)_{\mathfrak m}.
\end{gathered}
\]

We now state the main result of this subsection.

\begin{thm}\label{thm:isogeny-exponents}
	\begin{enumerate}
		\item
		$
		r^D(M)=r(A_V)=r(A_V^\vee).
		$
		
		\item
		$
		r_{\mathfrak m}^D(M)
		=
		r_{\mathfrak m}(A_V)
		=
		r_{\mathfrak m}(A_V^\vee)
		=
		\delta_{\mathfrak m}(A_V)
		=
		\delta_{\mathfrak m}(A_V^\vee).
		$
	\end{enumerate}
\end{thm}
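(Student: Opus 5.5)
We first prove part (1), the isogeny invariance of the congruence exponent, which is purely algebraic. The congruence ideal depends only on the Hecke algebras acting on the two factors, not on the varieties themselves. The quotient map \(\pi:J^D(M)\to J^D(M)_V\) is a \(\mathbb T^D(M)\)-equivariant isogeny and maps \(A\) onto \(A_V\). Hence \(\ker(\pi_{A_V})=\ker(\pi_A)\): an endomorphism \(t\) kills \(A_V=\pi(A)\) iff \(t(A)\subseteq\ker\pi\cap A\). Since \(\ker\pi\) is finite, this forces \(t|_A\) to have finite image, so \(t|_A=0\) because \(A\) is connected. The same argument gives \(\ker(\pi_{B_V})=\ker(\pi_B)\). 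Therefore \(\mathbb T^D(M)_{A_V}\cong\mathbb T^D(M)_A\), compatibly with \(R_V\cong R\), and so \(r(A_V)=r^D(M)\).

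For the dual we run the same argument with the dual isogeny. The variety \(A_V^\vee=(J^D(M)_V/I)^\vee\) receives an isogeny from \(A_V\) (or \(A^\vee\)-type variety), and its annihilator is again \(I\). Dualizing sends \(t\mapsto t^\vee\); using the Atkin--Lehner twist from Section~3.1 we identify the resulting action with the original one. Similarly \(B_V^\vee=IJ^D(M)_V^\vee\) has annihilator \(\ker\pi_B\). Thus \(r(A_V^\vee)=r^D(M)\). Localizing at \(\mathfrak m\) is exact and commutes with these identifications, which gives the first three equalities of part (2).

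The substantive step is \(r_{\mathfrak m}(A_V)=\delta_{\mathfrak m}(A_V)\), where we use that \(\mathcal T_{\mathfrak m}(J^D(M)_V)\cong V\cong T^2\) is free. The inclusion \(R_V\subseteq S_V\) always holds: if \(t\) kills \(B_V\), then \(\pi_{A_V}(t)\) kills \(A_V\cap B_V\). So it suffices to show \((S_V)_{\mathfrak m}\subseteq (R_V)_{\mathfrak m}\). Work with \(\mathfrak m\)-divisible groups, writing \(\mathfrak a=\ker\pi_A\otimes T\) and \(\mathfrak b=\ker\pi_B\otimes T\). With \(V\) free, we expect \(A_V[\mathfrak m^\infty]\) and \(B_V[\mathfrak m^\infty]\) to correspond under the pairing to the quotients \((T/\mathfrak a)^2\) and \((T/\mathfrak b)^2\) of \(V\), up to saturation issues. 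The key step is to show that the \(\mathfrak m\)-part of \(A_V\cap B_V\) is then dual to \((T/(\mathfrak a+\mathfrak b))^2\). Its annihilator in \(T/\mathfrak a\) is exactly \((\mathfrak a+\mathfrak b)/\mathfrak a=(R_V)_{\mathfrak m}\), which is the desired equality. This mirrors the standard multiplicity-one argument (as in \cite{agashe2011modular}), with freeness of \(V\) replacing \(\dim J[\mathfrak m]=2\).

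The main obstacle I expect is precisely this identification of the Tate modules of \(A_V\) and \(B_V\) as \(V/V[\mathfrak a]\)-type quotients (or \(V[\mathfrak b]\)-type submodules), together with the exact computation of the intersection. We need the contravariant/covariant conventions to line up so that subvarieties correspond to quotients of \(V\), and we need the saturation \(V[\mathfrak b]\) to equal \(\mathfrak aV\)-type expressions. The latter holds because \(V\) is free and \(T/\mathfrak a\), \(T/\mathfrak b\) are torsion-free with \(\mathfrak a\cap\mathfrak b=0\) rationally. Rather than using subvariety tangent arguments, I would first attempt this via \(\operatorname{Ann}_T\) computations in \(T^2\). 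Finally, \(\delta_{\mathfrak m}(A_V^\vee)=\delta_{\mathfrak m}(A_V)\) follows from the same computation applied to \(J^D(M)_V^\vee\): by Proposition~\ref{prop:dual-tate-module}, its covariant \(\mathfrak m\)-adic Tate module is also \(\cong V\), hence free, so it equals \(r_{\mathfrak m}(A_V^\vee)\).
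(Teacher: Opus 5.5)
Your part~(1) is correct and a little more direct than the paper's: connectedness of \(t(A)\) gives \(\ker\pi_{A_V}=\ker\pi_A\) and \(\ker\pi_{B_V}=\ker\pi_B\) at once, whereas the paper proves two inclusions using \(\phi\) and a reverse isogeny. For \(r_{\mathfrak m}(A_V)=\delta_{\mathfrak m}(A_V)\) you take a genuinely different route.

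The paper never computes \(A_V\cap B_V\). It embeds \(\operatorname{End}_{\mathbb T^D(M)}(J^D(M)_V)\) into \(\operatorname{End}(A_V)\oplus\operatorname{End}(B_V)\); this is what Proposition~\ref{prop:hecke-equivariant-invariance} is for. It then identifies \(S_V=\mathbb T^D(M)'\cap\mathbb T^D(M)_{A_V}\) and \(R_V=\mathbb T^D(M)\cap\mathbb T^D(M)_{A_V}\) for the saturation \(\mathbb T^D(M)'\), so that \(S_V/R_V\hookrightarrow\mathbb T^D(M)'/\mathbb T^D(M)\). Freeness of \(V\) is used only to see that the saturation is trivial after localizing, as in \cite[Lemma~5.8]{agashe2011modular}: an element of \(T\otimes\mathbb Q_\ell\) preserving \(T^2\) already lies in \(T\).

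Your direct computation bypasses endomorphism rings and the invariance proposition. It also gives more: the dual \((T/(\mathfrak a+\mathfrak b))^2\) of \((A_V\cap B_V)[\mathfrak m^\infty]\), not just its annihilator. The saturation argument, in exchange, does not care which \(\mathfrak m\)-adic Tate module is free, so it treats \(J^D(M)_V\) and \(J^D(M)_V^\vee\) identically.

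As written, however, the decisive step is only announced (``we expect'', ``the key step is to show''), so you must supply it. It is short:
\begin{enumerate}
\item The annihilator \(K_A\subseteq V\) of \(A_V[\mathfrak m^\infty]\) under the pairing contains \(\mathfrak aV\), because \(\langle t\varphi,x\rangle=\langle\varphi,tx\rangle\).
\item After tensoring with \(\mathbb Q_\ell\), both \(K_A\) and \(\mathfrak aV\) become the image of \(V\otimes\mathbb Q_\ell\) under the idempotent of the \(B\)-factor, so \(K_A/\mathfrak aV\) is torsion.
\item Since \(V/\mathfrak aV\cong(T/\mathfrak a)^2\) is torsion-free, \(K_A=\mathfrak aV\). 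Likewise the annihilator of \(B_V[\mathfrak m^\infty]\) is \(\mathfrak bV\). This is exactly where freeness enters: for a non-free module such as \(L\) one only gets the saturations of \(\mathfrak aL\) and \(\mathfrak bL\).
\item The pairing exhibits \(V\) as the Pontryagin dual of \(J^D(M)_V[\mathfrak m^\infty]\). Under this duality the annihilator of an intersection of subgroups is the sum of the annihilators, since finitely generated \(\mathbb Z_\ell\)-submodules of \(V\) are closed.
\item Hence \((A_V\cap B_V)[\mathfrak m^\infty]=A_V[\mathfrak m^\infty]\cap B_V[\mathfrak m^\infty]\) is dual to \(V/(\mathfrak a+\mathfrak b)V\cong(T/(\mathfrak a+\mathfrak b))^2\), and its annihilator in \(T\) is \(\mathfrak a+\mathfrak b\).
\item Flatness of \(\mathbb T^D(M)\to T\) then gives \((S_V)_{\mathfrak m}=(\mathfrak a+\mathfrak b)/\mathfrak a=(R_V)_{\mathfrak m}\).
\end{enumerate}

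Your last sentence needs care. The computation above uses the \emph{contravariant} module paired with the divisible group, which is the free one for \(J^D(M)_V\). For \(J^D(M)_V^\vee\), Proposition~\ref{prop:dual-tate-module} makes the \emph{covariant} module \(H=\widetilde{\mathcal T}_{\mathfrak m}(J^D(M)_V^\vee)\) free. The contravariant one is \(\operatorname{Hom}_{\mathbb Z_\ell}(H,\mathbb Z_\ell(1))\cong\operatorname{Hom}_{\mathbb Z_\ell}(T,\mathbb Z_\ell)^2\) as a \(T\)-module, which is free only when \(T\) is Gorenstein.

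So ``the same computation'' has to be the covariant one. Here \(A_V^\vee\) and \(B_V^\vee\) correspond to the saturated submodules \(H[\mathfrak a]\) and \(H[\mathfrak b]\), and \((A_V^\vee\cap B_V^\vee)_{\mathfrak m}\cong H/(H[\mathfrak a]+H[\mathfrak b])\) by \cite[Proposition~3.2]{agashe2005visible}; this is the formula the paper itself uses in proving Theorem~A. Freeness of \(H\), together with \(\operatorname{Ann}_T\mathfrak a=\mathfrak b\) and \(\operatorname{Ann}_T\mathfrak b=\mathfrak a\), gives \(H[\mathfrak a]=\mathfrak bH\) and \(H[\mathfrak b]=\mathfrak aH\). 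So this quotient is again \((T/(\mathfrak a+\mathfrak b))^2\), and \(r_{\mathfrak m}(A_V^\vee)=\delta_{\mathfrak m}(A_V^\vee)\) follows.
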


Part~(1) follows from tracing the definitions. For Part~(2), we use the
strategy of \cite{agashe2011modular}, together with the freeness of
$
\mathcal T_{\mathfrak m}(J^D(M)_V)=V.
$
A crucial step in loc.\ cit. uses the fact that, by strong multiplicity one
for newforms, \(A\) and \(B\) are
\(\operatorname{End}(J^D(M))_{\mathbb Q}\)-invariant. Since \(J^D(M)_V\) is
defined over a number field \(K\), the corresponding statement with the full
ring \(\operatorname{End}(J^D(M)_V)_K\) need not hold because of twists of
newforms. We therefore work with the endomorphisms commuting with the Hecke
action.

\begin{prop}\label{prop:hecke-equivariant-invariance}
	The abelian subvarieties \(A_V\) and \(B_V\) are invariant under
	$
	\operatorname{End}_{\mathbb T^D(M)}(J^D(M)_V).
	$
\end{prop}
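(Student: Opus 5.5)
The plan is to identify \(A_V\) and \(B_V\) Hecke-theoretically, as images or connected kernels of elements of \(\mathbb T^D(M)\). Anything defined this way is automatically preserved by every endomorphism commuting with \(\mathbb T^D(M)\).

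First, let \(I=\ker(\pi_A)\). Since \(A=A_f\) is attached to a newform, strong multiplicity one gives \(A=J^D(M)[I]^0\), the connected component of the \(I\)-torsion. One can equally write \(A=\bigcap_{t\in I}\ker(t)^0\).

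For \(B\), I would use that over \(\mathbb Q\) the complement is determined Hecke-theoretically: \(B=IJ^D(M)\), the sum of the images \(tJ^D(M)\) for \(t\in I\). The paper lets \(B\) be any Hecke-stable complement, so I would justify this identification as follows. Rationally the Hecke algebra \(\mathbb T^D(M)\otimes\mathbb Q\) splits as a product with one factor \(\mathbb T_A\otimes\mathbb Q\). The Hecke-stable abelian subvarieties of \(J^D(M)\) up to isogeny correspond to Hecke-stable subspaces of \(H_1(J^D(M),\mathbb Q)\). Any Hecke-stable complement of \(A\) must therefore have rational homology equal to the sum of the non-\(A\) isotypic parts, i.e.\ \(I\cdot H_1\otimes\mathbb Q\). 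An abelian subvariety is determined by its rational homology, so \(B=IJ^D(M)\). (If \(B\) were non-unique this step would fail, but the newform isotypic decomposition pins it down.)

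Next, transport these descriptions through the equivariant isogeny \(\pi\). Since \(\pi t=t\pi\), we get \(B_V=\pi(IJ^D(M))=I\,\pi(J^D(M))=IJ^D(M)_V\). Similarly \(A_V=\pi(A)\) equals \(J^D(M)_V[I]^0\). To see this, note that \(\pi(A)\subseteq J^D(M)_V[I]\) and is connected. Conversely, taking an isogeny \(\psi\) with \(\psi\pi=n\), which is Hecke-equivariant after inverting a degree, carries \(J^D(M)_V[I]^0\) into \(A\). Comparing dimensions then gives equality.

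Finally, let \(\phi\in\operatorname{End}_{\mathbb T^D(M)}(J^D(M)_V)\). For \(t\in I\) we have \(\phi(tJ^D(M)_V)=t\phi(J^D(M)_V)\subseteq tJ^D(M)_V\), so \(\phi(B_V)\subseteq B_V\). Also \(\phi\) maps \(J^D(M)_V[I]\) into itself, and being a homomorphism it sends the identity component into the identity component, so \(\phi(A_V)\subseteq A_V\).

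The main obstacle is the identification \(B=IJ^D(M)\), equivalently the uniqueness of the Hecke-stable complement. Here I would rely on strong multiplicity one: the eigensystem of \(f\) does not occur in \(B\), so \(\pi_B(I)\) has finite index in \(\pi_B(\mathbb T^D(M)\otimes\mathbb Q)\). Hence \(IB=B\), while \(IA=0\), and therefore \(IJ^D(M)=IA+IB=B\). Working over \(\overline{\mathbb Q}\) causes no issue, because these arguments only use Hecke-commutation and not Galois-equivariance.
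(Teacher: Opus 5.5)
Your proof is correct, and its guiding idea is the paper's: describe $A_V$ and $B_V$ through the ideal $I=I_f$, so that every endomorphism commuting with $\mathbb T^D(M)$ has to preserve them. The execution differs in two places.

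\emph{For $A_V$:} the paper never identifies $A_V$ exactly. It lifts points through $\pi$ to get $[cd]\,J^D(M)_V[I_f]\subseteq A_V$, where $c$ kills $J^D(M)[I_f]/A$ and $d$ kills $\ker\pi$. It then notes that $q\circ\phi|_{A_V}$ has connected image killed by $cd$, hence is zero. You instead prove the exact equality $A_V=J^D(M)_V[I]^0$ using a quasi-inverse and a dimension count. Your hedge about $\psi$ is unnecessary: $\psi$ is Hecke-equivariant on the nose, since $(\psi t-t\psi)\pi=0$ and $\pi$ is surjective.

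\emph{For $B_V$:} the paper argues symmetrically through the connected-kernel description $B=J^D(M)[\operatorname{Ann}_{\mathbb T^D(M)}(I_f)]^0$. You use the image description $B=IJ^D(M)$ instead. It passes through the surjective equivariant isogeny as $B_V=IJ^D(M)_V$ and is visibly preserved by $\phi$, with no torsion bookkeeping at all.

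Your route also supplies a point the paper leaves implicit. The paper lets $B$ be an arbitrary Hecke-stable complement, but then uses $B=J^D(M)[\operatorname{Ann}_{\mathbb T^D(M)}(I_f)]^0$ without comment. Your homology argument closes this: the rational homology of any Hecke-stable complement is killed by the $f$-idempotent, so it equals the complementary isotypic part. Hence the complement is unique, which justifies both descriptions of $B$ at once.
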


\begin{proof}
	We prove the statement for \(A_V\); the proof for \(B_V\) is identical.
	
	Set
	$
	I_f
	=
	\ker\!\left(
	\mathbb T^D(M)\longrightarrow\operatorname{End}(A)
	\right).
	$
	By the choice of the \(f\)-isotypic factor,
	$
	A=J^D(M)[I_f]^0.
	$
	Thus \(J^D(M)[I_f]/A\) is finite; choose a positive integer \(c\) annihilating
	this quotient. Let
	$
	\pi:J^D(M)\longrightarrow J^D(M)_V
	$
	be the defining \(\mathbb T^D(M)\)-equivariant isogeny over $\bar{\mathbb{Q}}$, and choose a positive
	integer \(d\) annihilating \(\ker(\pi)\).
	We first claim that
	$
	[cd]\,J^D(M)_V[I_f]\subseteq A_V.
	$
	Indeed, let \(y\in J^D(M)_V[I_f]\) and choose \(x\in J^D(M)\) with
	\(\pi(x)=y\). For every \(t\in I_f\),
	\[
	\pi(tx)=t\pi(x)=ty=0,
	\]
	so \(tx\in\ker(\pi)\), hence \(dtx=0\). Therefore
	\(dx\in J^D(M)[I_f]\). Since \(c\) annihilates
	\(J^D(M)[I_f]/A\), we have \(cdx\in A\), and consequently
	\[
	[cd]y=\pi(cdx)\in\pi(A)=A_V.
	\]
	Now let
	$
	\phi\in
	\operatorname{End}_{\mathbb T^D(M)}(J^D(M)_V).
	$
	Since \(I_f\) annihilates \(A_V\) and \(\phi\) commutes with the Hecke action,
	for every \(t\in I_f\) and \(x\in A_V\),
	$
	t\phi(x)=\phi(tx)=0.
	$
	Thus
	$
	\phi(A_V)\subseteq J^D(M)_V[I_f],
	$
	and the preceding claim gives
	$
	[cd]\phi(A_V)\subseteq A_V.
	$
	Let
	$
	q:J^D(M)_V\longrightarrow J^D(M)_V/A_V
	$
	be the quotient map. Then
	$
	[cd]\;q\circ\phi|_{A_V}=0.
	$
	The image of \(q\circ\phi|_{A_V}\) is a connected abelian subvariety of
	\(J^D(M)_V/A_V\), while the kernel of multiplication by \(cd\) is finite.
	Hence this image is trivial, so \(\phi(A_V)\subseteq A_V\).
	
	For \(B_V\), the same argument applies with
	\(\operatorname{Ann}_{\mathbb T^D(M)}(I_f)\) in place of \(I_f\), using
	$
	B
	=
	J^D(M)
	[\operatorname{Ann}_{\mathbb T^D(M)}(I_f)]^0.
	$
	This proves the proposition.
\end{proof}

\begin{proof}[Proof of Theorem~\ref{thm:isogeny-exponents}]
	\begin{enumerate}
		\item
		We first show \(r^D(M)=r(A_V)\); the equality
		\(r^D(M)=r(A_V^\vee)\) follows similarly. The argument is that of Theorem~A
		in \cite{hamidi2026}, with a minor modification.
		
		By construction, there is a \(\mathbb T^D(M)\)-equivariant isogeny
		$
		\phi:J^D(M)\longrightarrow J^D(M)_V.
		$
		If \(t\in\ker(\pi_B)\) and \(y\in B_V\), choose \(x\in B\) with
		\(y=\phi(x)\). Then
		\[
		ty=t\phi(x)=\phi(tx)=0,
		\]
		so \(t\in\ker(\pi_{B_V})\). Thus
		$
		\ker(\pi_B)\hookrightarrow\ker(\pi_{B_V}),
		$
		and similarly
		$
		\ker(\pi_A)\hookrightarrow\ker(\pi_{A_V}).
		$
		Hence
		\[
		R^D(M)=\pi_A(\ker(\pi_B))
		\hookrightarrow
		\pi_{A_V}(\ker(\pi_{B_V}))=R_V.
		\]
		The identity on \(\mathbb T^D(M)\) therefore induces a surjection
		\[
		\mathbb T^D(M)/R^D(M)
		\twoheadrightarrow
		\mathbb T^D(M)/R_V,
		\]
		and consequently \(r^D(M)\geq r(A_V)\).
		
		For the reverse inequality, consider a dual isogeny
		\[
		\widehat\phi:J^D(M)_V\longrightarrow J^D(M)
		\]
		with \(\widehat\phi\circ\phi=[n]\). Since multiplication by \(n\) commutes
		with the Hecke action and
		\(\widehat\phi(A_V)=A\), \(\widehat\phi(B_V)=B\), the same argument applied
		to \(\widehat\phi\) gives \(R_V\hookrightarrow R^D(M)\), and hence
		\(r(A_V)\geq r^D(M)\). Therefore
		\[
		r^D(M)=r(A_V).
		\]
		
		\item
		The equality
		\[
		r_{\mathfrak m}^D(M)
		=
		r_{\mathfrak m}(A_V)
		=
		r_{\mathfrak m}(A_V^\vee)
		\]
		follows from Part~(1) after localization. It remains to prove
		\[
		r_{\mathfrak m}(A_V)=\delta_{\mathfrak m}(A_V),
		\qquad
		r_{\mathfrak m}(A_V^\vee)=\delta_{\mathfrak m}(A_V^\vee).
		\]
		
		By Proposition~\ref{prop:hecke-equivariant-invariance}, restriction gives a
		natural inclusion
		\[
		\operatorname{End}_{\mathbb T^D(M)}(J^D(M)_V)
		\hookrightarrow
		\operatorname{End}_{\mathbb T^D(M)_{A_V}}(A_V)
		\oplus
		\operatorname{End}_{\mathbb T^D(M)_{B_V}}(B_V).
		\]
		Let \(\mathbb T^D(M)'\) be the saturation of \(\mathbb T^D(M)\) inside
		\(\operatorname{End}_{\mathbb T^D(M)}(J^D(M)_V)\):
		\[
		\mathbb T^D(M)'
		=
		\left\{
		\phi\in
		\operatorname{End}_{\mathbb T^D(M)}(J^D(M)_V)
		:
		n\phi\in\mathbb T^D(M)
		\text{ for some }n\in\mathbb N
		\right\}.
		\]
		Inside the preceding direct sum,
		\[
		S_V=\mathbb T^D(M)'\cap\mathbb T^D(M)_{A_V},
		\qquad
		R_V=\mathbb T^D(M)\cap\mathbb T^D(M)_{A_V}.
		\]
		Therefore
		\[
		\frac{S_V}{R_V}
		\cong
		\frac{
			\mathbb T^D(M)'\cap\mathbb T^D(M)_{A_V}+\mathbb T^D(M)
		}{
			\mathbb T^D(M)
		}
		\hookrightarrow
		\frac{\mathbb T^D(M)'}{\mathbb T^D(M)}.
		\]
		Since
		\[
		\mathcal T_{\mathfrak m}(J^D(M)_V)
		=
		V
		\cong
		T^{\,2},
		\]
		the argument of Lemma~5.8 in \cite{agashe2011modular} implies
		$
		\mathbb T^D(M)'_{\mathfrak m}
		=
		\mathbb T^D(M)_{\mathfrak m}.
		$
		Thus
		$
		(S_V)_{\mathfrak m}=(R_V)_{\mathfrak m},
		$
		and therefore
		\[
		r_{\mathfrak m}(A_V)=\delta_{\mathfrak m}(A_V).
		\]
		
		For \(A_V^\vee\), by Proposition~\ref{prop:dual-tate-module},
		\[
		\widetilde{\mathcal T}_{\mathfrak m}(J^D(M)_V^\vee)
		\cong
		\mathcal T_{\mathfrak m}(J^D(M)_V)
		\cong
		V
		\cong
		T^{\,2}.
		\]
		Applying the same argument gives
		$
		r_{\mathfrak m}(A_V^\vee)
		=
		\delta_{\mathfrak m}(A_V^\vee).
		$
	\end{enumerate}
\end{proof}

\subsection{Proof of Theorem A}

\begin{proof}[Proof of Theorem A]
	Let \(V\) be a lattice. Recall that $	\widetilde{\mathcal T}_{\mathfrak m}(J^D(M)_V^\vee)$ denotes the covariant $\mathfrak{m}$-adic Tate module of $J^D(M)_V^\vee$. By construction and
	Proposition~\ref{prop:dual-tate-module},
	\[
	\widetilde{\mathcal T}_{\mathfrak m}(J^D(M)_V^\vee)
	\cong
	\mathcal T_{\mathfrak m}(J^D(M)_V)
	\cong V.
	\]
	By Theorem~\ref{thm:isogeny-exponents},
	$
	r_{\mathfrak m}(A_V^\vee)
	=
	\delta_{\mathfrak m}(A_V^\vee).
	$
	
	We compare these quantities through their homological realizations. By
	\cite[Proposition~3.2]{agashe2005visible},
	\[
	A\cap B
	\cong
	\frac{\mathrm H_1(J^D(M),\mathbb Z)}
	{\mathrm H_1(J^D(M),\mathbb Z)[I_f]
		+
		\mathrm H_1(J^D(M),\mathbb Z)
		[\operatorname{Ann}_{\mathbb T^D(M)}(I_f)]},
	\]
	and
	\[
	A_V^\vee\cap B_V^\vee
	\cong
	\frac{\mathrm H_1(J^D(M)_V^\vee,\mathbb Z)}
	{\mathrm H_1(J^D(M)_V^\vee,\mathbb Z)[I_f]
		+
		\mathrm H_1(J^D(M)_V^\vee,\mathbb Z)
		[\operatorname{Ann}_{\mathbb T^D(M)}(I_f)]}.
	\]
	By the standard duality for abelian varieties,
	\[
	\mathrm H_1(J^D(M)_V^\vee,\mathbb Z)
	\cong
	\mathrm H^1(J^D(M)_V,\mathbb Z),
	\]
	so localizing at \(\mathfrak m\) and identifying the resulting modules with
	the corresponding contravariant \(\mathfrak m\)-adic Tate modules gives
	\[
	(A\cap B)_{\mathfrak m}
	\cong
	\frac{\mathcal T_{\mathfrak m}(J^D(M))}
	{\mathcal T_{\mathfrak m}(J^D(M))[I_f]
		+
		\mathcal T_{\mathfrak m}(J^D(M))
		[\operatorname{Ann}_{\mathbb T^D(M)}(I_f)]}
	\]
	and
	\[
	(A_V^\vee\cap B_V^\vee)_{\mathfrak m}
	\cong
	\frac{\mathcal T_{\mathfrak m}(J^D(M)_V)}
	{\mathcal T_{\mathfrak m}(J^D(M)_V)[I_f]
		+
		\mathcal T_{\mathfrak m}(J^D(M)_V)
		[\operatorname{Ann}_{\mathbb T^D(M)}(I_f)]}.
	\]
	
	The inclusion
	\[
	i:
	\mathcal T_{\mathfrak m}(J^D(M)_V)
	\hookrightarrow
	\mathcal T_{\mathfrak m}(J^D(M))
	\]
	induces a natural map
	\[
	\widetilde i:
	\frac{\mathcal T_{\mathfrak m}(J^D(M)_V)}
	{\mathcal T_{\mathfrak m}(J^D(M)_V)[I_f]
		+
		\mathcal T_{\mathfrak m}(J^D(M)_V)
		[\operatorname{Ann}_{\mathbb T^D(M)}(I_f)]}
	\longrightarrow
	\frac{\mathcal T_{\mathfrak m}(J^D(M))}
	{\mathcal T_{\mathfrak m}(J^D(M))[I_f]
		+
		\mathcal T_{\mathfrak m}(J^D(M))
		[\operatorname{Ann}_{\mathbb T^D(M)}(I_f)]}.
	\]
	
	Let \([x]\in\ker(\widetilde i)\). Then
	$
	i(x)=a+b
	$
	for some
	\[
	a\in\mathcal T_{\mathfrak m}(J^D(M))[I_f],
	\qquad
	b\in
	\mathcal T_{\mathfrak m}(J^D(M))
	[\operatorname{Ann}_{\mathbb T^D(M)}(I_f)].
	\]
	Put
	\[
	e_V
	=
	\exp\!\left(
	\frac{\mathcal T_{\mathfrak m}(J^D(M))[I_f]+V}{V}
	\right).
	\]
	Then \(e_Va\in V\). Choose \(a'\in V\) with \(i(a')=e_Va\). Since \(i\)
	is injective and \(T\)-equivariant,
	\(a'\in V[I_f]\). Moreover,
	\[
	e_Vb
	=
	e_Vi(x)-e_Va
	=
	i(e_Vx-a').
	\]
	Because \(b\) is annihilated by
	\(\operatorname{Ann}_{\mathbb T^D(M)}(I_f)\), injectivity and equivariance of
	\(i\) imply
	\[
	b':=e_Vx-a'
	\in
	V[\operatorname{Ann}_{\mathbb T^D(M)}(I_f)].
	\]
	Therefore
	\[
	e_Vx
	=
	a'+b'
	\in
	\mathcal T_{\mathfrak m}(J^D(M)_V)[I_f]
	+
	\mathcal T_{\mathfrak m}(J^D(M)_V)
	[\operatorname{Ann}_{\mathbb T^D(M)}(I_f)].
	\]
	Thus \(\ker(\widetilde i)\) has exponent at most \(e_V\), and hence
	\[
	r_{\mathfrak m}^{D}(M)
	=
	\delta_{\mathfrak m}(A_V^\vee)
	\leq
	\delta_{\mathfrak m}^{D}(M)\,e_V.
	\]
	Since this holds for every lattice \(V\),
	\[
	r_{\mathfrak m}^{D}(M)
	\leq
	\delta_{\mathfrak m}^{D}(M)
	\min_V
	\exp\!\left(
	\frac{\mathcal T_{\mathfrak m}(J^D(M))[I_f]+V}{V}
	\right).
	\]
	For every \(V\),
	\[
	\frac{\mathcal T_{\mathfrak m}(J^D(M))[I_f]+V}{V}
	\subseteq
	\frac{\mathcal T_{\mathfrak m}(J^D(M))}{V},
	\]
	so
	\[
	\min_V
	\exp\!\left(
	\frac{\mathcal T_{\mathfrak m}(J^D(M))[I_f]+V}{V}
	\right)
	\leq
	\delta_{\mathrm{def}}(\mathfrak m).
	\]
	Therefore
	$
	r_{\mathfrak m}^{D}(M)
	\leq
	\delta_{\mathfrak m}^{D}(M)\,
	\delta_{\mathrm{def}}(\mathfrak m),
	$
	which proves Theorem~A.
\end{proof}

\subsection{Second Proof}

We give a second proof of Theorem~A using the saturation of the Hecke algebra
in the endomorphism algebra.

\begin{proof}[Second proof of Theorem A]
	Let \(V\) be a lattice with associated abelian variety \(J^D(M)_V\), and let
	$
	\pi:J^D(M)\longrightarrow J^D(M)_V
	$
	be the corresponding \(\mathbb T^D(M)\)-equivariant isogeny with kernel
	\(G\). On contravariant \(\mathfrak m\)-adic Tate modules, \(\pi\) induces
	the inclusion
	$
	V
	=
	\mathcal T_{\mathfrak m}(J^D(M)_V)
	\hookrightarrow
	\mathcal T_{\mathfrak m}(J^D(M)),
	$
	whose cokernel is
	\[
	\mathcal T_{\mathfrak m}(J^D(M))/V.
	\]
	The exponent of \(G\) equals the exponent of this quotient. Denote this common
	exponent by \(e\).
	Since \(G\) is annihilated by \(e\), multiplication by \(e\) on \(J^D(M)\)
	factors through \(\pi\). Thus there is a
	\(\mathbb T^D(M)\)-equivariant isogeny
	$
	\pi^\vee:J^D(M)_V\longrightarrow J^D(M)
	$
	such that
	$
	\pi^\vee\circ\pi=[e],
	~
	\pi\circ\pi^\vee=[e].
	$
	Let \(\mathbb T^D(M)'\) and \(\mathbb T^D(M)'_V\) denote the saturations of
	\(\mathbb T^D(M)\) in \(\operatorname{End}(J^D(M))\) and
	\(\operatorname{End}(J^D(M)_V)\), respectively. By the proof of
	Theorem~\ref{thm:isogeny-exponents}, since
	$
	\mathcal T_{\mathfrak m}(J^D(M)_V)
	\cong V \cong T^{\,2},
	$
	we have
	$
	\widehat{\mathbb T^D(M)'_{V,\mathfrak m}}=T.
	$
    Consequently,
	$
	\mathbb T^D(M)'_{V,\mathfrak m}
	=
	\mathbb T^D(M)_{\mathfrak m}.
	$
	Consider
	\[
	\Phi:\operatorname{End}(J^D(M))
	\longrightarrow
	\operatorname{End}(J^D(M)_V),
	\qquad
	\Phi(\phi)=\pi\circ\phi\circ\pi^\vee.
	\]
	Since \(\pi^\vee\circ\pi=[e]\), the map \(\Phi\) is injective. Indeed, if
	\(\Phi(\phi)=0\), then composing on the left by \(\pi^\vee\) and on the
	right by \(\pi\) gives
	\[
	[e^2]\circ\phi=0.
	\]
	The endomorphism ring of an abelian variety is torsion-free, so \(\phi=0\).
	If \(\phi\in\mathbb T^D(M)'\), choose a positive integer \(n\) such that
	\(n\phi\in\mathbb T^D(M)\). By Hecke equivariance,
	\[
	n\Phi(\phi)
	=
	\pi\circ(n\phi)\circ\pi^\vee
	=
	e(n\phi)
	\in\mathbb T^D(M),
	\]
	where the right-hand side acts on \(J^D(M)_V\). Thus
	\[
	\Phi(\mathbb T^D(M)')\subseteq\mathbb T^D(M)'_V.
	\]
	After localization, this gives
	\[
	\Phi:
	\mathbb T^D(M)'_{\mathfrak m}
	\longrightarrow
	\mathbb T^D(M)'_{V,\mathfrak m}.
	\]
	Let \(\phi\in\mathbb T^D(M)'_{\mathfrak m}\). Since
	$
	\mathbb T^D(M)'_{V,\mathfrak m}
	=
	\mathbb T^D(M)_{\mathfrak m},
	$
	there exists \(t\in\mathbb T^D(M)_{\mathfrak m}\) such that
	\[
	\pi\circ\phi\circ\pi^\vee=t.
	\]
	Composing on the left by \(\pi^\vee\) and on the right by \(\pi\), and using
	Hecke equivariance, gives
	$
	e^2\phi=et.
	$
	Since the localized endomorphism module is torsion-free over \(\mathbb Z\),
	we may cancel \(e\) to obtain
	$
	e\phi=t\in\mathbb T^D(M)_{\mathfrak m}.
	$
	Consequently,
	$
	e\,\frac{\mathbb T^D(M)'_{\mathfrak m}}
	{\mathbb T^D(M)_{\mathfrak m}}=0.
	$
	
	It follows that
	\[
	\exp\!\left(
	\frac{\mathbb T^D(M)'_{\mathfrak m}}
	{\mathbb T^D(M)_{\mathfrak m}}
	\right)
	\leq e
	=
	\exp\!\left(
	\mathcal T_{\mathfrak m}(J^D(M))/V
	\right).
	\]
	Minimizing over all lattices \(V\) gives
	\[
	\exp\!\left(
	\frac{\mathbb T^D(M)'_{\mathfrak m}}
	{\mathbb T^D(M)_{\mathfrak m}}
	\right)
	\leq
	\delta_{\mathrm{def}}(\mathfrak m).
	\]
	By Proposition~3.4 of \cite{hamidi2026}, there is an inclusion
	$
	\frac{S^D(M)_{\mathfrak m}}{R^D(M)_{\mathfrak m}}
	\hookrightarrow
	\frac{\mathbb T^D(M)'_{\mathfrak m}}
	{\mathbb T^D(M)_{\mathfrak m}}.
	$
	Hence
	\[
	\exp\!\left(
	S^D(M)_{\mathfrak m}/R^D(M)_{\mathfrak m}
	\right)
	\leq
	\delta_{\mathrm{def}}(\mathfrak m).
	\]
	Equivalently,
	\[
	r_{\mathfrak m}^{D}(M)
	\leq
	\delta_{\mathfrak m}^{D}(M)\,
	\delta_{\mathrm{def}}(\mathfrak m),
	\]
	which completes the proof.
\end{proof}

%% file: Gorenstein.tex
\section{Gorensteinness and Multiplicity One}

We exhibit an explicit example showing that the Gorenstein property does not imply multiplicity one. We also compute the multiplicity-one defect in this example and show that
\[
\delta_{\mathrm{def}}(\mathfrak m)
=
\frac{\text{congruence exponent}}{\text{modular exponent}}
=3.
\]
Thus the lower bound for the multiplicity-one defect given by Theorem A is sharp. 

\begin{exa}\label{ex:126-gorenstein}
	Let $E=126A1$ in Cremona notation \cite{cremona1997algorithms}. In the
	LMFDB \cite{lmfdb} this is the $\Gamma_0(126)$-optimal curve $126.b5$
	in the isogeny class $126.b$, and the corresponding normalized newform is
	the rational newform
	$f=f_E=126.2.a.b$.
	The modular degree and congruence number of $E$ are
	$
	m_E=8,
	~
	r_E=24.
	$
	Thus
	$
	3\mid r_E,
	~
	3\nmid m_E,
	$
	and hence multiplicity one fails at the maximal ideal $\mathfrak m$ of
	residue characteristic $3$ associated to $f$ by
	\cite{agashe2011modular}.
	We show nevertheless that the localized Hecke algebra is Gorenstein.
	More precisely, if $\mathbb T$ denotes the full Hecke algebra acting on
	$S_2(\Gamma_0(126))$ and
	$
	\lambda_f:\mathbb T\longrightarrow\mathbb Z
	$
	is the system of Hecke eigenvalues attached to $f$, and
	$
	\mathfrak m=(3,\ker\lambda_f),
	$
	then the $\mathfrak m$-adic completion satisfies
	$
	\widehat{\mathbb T_{\mathfrak m}}
	\simeq
	\mathbb Z_3\times_{\mathbb F_3}\mathcal O,
	~\text{and}~
	\mathcal O=\mathbb Z_3[\sqrt3],
	$
	where the maps to $\mathbb F_3$ are reduction modulo $3$ on
	$\mathbb Z_3$ and reduction modulo the maximal ideal
	$
	\lambda=(\sqrt3)
	$
	on $\mathcal O$. Equivalently,
	\[
	\widehat{\mathbb T_{\mathfrak m}}
	\simeq
	\left\{
	(a,b)\in\mathbb Z_3\times\mathcal O:
	b-a\in\sqrt3\,\mathcal O
	\right\}.
	\]
	We first determine the eigenspaces which survive after
	localization at $\mathfrak m$. Since
	$
	126=2\cdot3^2\cdot7,
	$
	the relevant newform levels dividing $126$ are
	$
	14,21,42,63,\text{ and }126,
	$
	and the newform orbits are
	$
	14.2.a.a,
	21.2.a.a,
	42.2.a.a,
	63.2.a.a,
	63.2.a.b,
	126.2.a.a,
	\text{ and }
	126.2.a.b=f,
	$
	as in \cite{lmfdb}.
	The $U_7$-eigenvalues are
	\[
	\begin{array}{c|c|c}
		\text{level} & \text{newform orbit} & U_7\text{-eigenvalue }(a_7) \\ \hline
		14  & 14.2.a.a  & 1\\
		21  & 21.2.a.a  & -1\\
		42  & 42.2.a.a  & -1\\
		63  & 63.2.a.a  & -1\\
		63  & 63.2.a.b  & 1\\
		126 & 126.2.a.a & -1\\
		126 & 126.2.a.b & 1
	\end{array}
	\]
	By $a_7$, we mean the $7$th Fourier coefficient of the corresponding normalized newform in each orbit.
	Since $a_7(f)=1$, every eigenspace on which $U_7$ has eigenvalue $-1$ is eliminated after
	localization. Thus the only possibilities remaining after
	the $U_7$ comparison are $f$, the level-$14$ old eigenspace, and the eigenspace
	arising from $63.2.a.b$.
	
	The level-$14$ contribution is eliminated by $U_2$. Indeed, if
	$
	g=14.2.a.a,
	$
	then $a_2(g)=-1$, and because the passage from level $14$ to level $126$
	is entirely in the $3$-direction, $U_2$ acts by $-1$ on the entire
	level-$14$ oldspace.
	It remains to analyze
	$h=63.2.a.b$. Let $\beta=\sqrt{3}$ and
	write
	\[
	K=\mathbb Q(\beta),
	\qquad
	\beta^2=3,
	\qquad
	\mathcal O=\mathbb Z_3[\beta],
	\qquad
	\lambda=(\beta).
	\]
	The $q$-expansion of $h$ begins
	\[
	h(q)
	=
	q+\beta q^2+q^4-2\beta q^5+q^7-\beta q^8
	-6q^{10}+2\beta q^{11}+2q^{13}+\cdots.
	\]
	A comparison of coefficients through the Sturm bound shows that the
	Hecke eigenvalues of $h$ away from $126$ reduce modulo $\lambda$ to the
	corresponding eigenvalues of $f$.
	
	Since $126=2\cdot63$, the form $h$ contributes a $2$-old space at level
	$126$. On this oldspace the characteristic polynomial of $U_2$ is
	$
	X^2-\beta X+2.
	$
	Modulo $\lambda=(\beta)$ this becomes
	\[
	X^2+2=X^2-1=(X-1)(X+1).
	\]
	Because $U_2(f)=1$, localization at $\mathfrak m$ selects the root
	$\alpha$ satisfying
	$
	\alpha\equiv1\pmod\lambda;
	$
	the other factor $X+1$ becomes a unit locally at $\mathfrak m$.
	
	The root $\alpha$ lies in $K_\lambda=\mathbb Q_3(\sqrt3)$. We now show
	that $\alpha-1$ is a uniformizer of $\mathcal O$. Put
	$
	\pi=\alpha-1.
	$
	Since $\alpha\equiv1\pmod\lambda$, we have
	$
	v_\lambda(\pi)>0.
	$
	Writing the equation
	$
	\alpha^2-\beta\alpha+2=0
	$
	in terms of $\pi$ gives
	\[
	(1+\pi)^2-\beta(1+\pi)+2=0,
	\]
	hence
	\[
	\pi^2+(2-\beta)\pi+(3-\beta)=0.
	\]
	Equivalently,
	\[
	\pi(\pi+2-\beta)=-(3-\beta).
	\]
	Now $2-\beta$ is a unit in $\mathcal O$, while
	$
	v_\lambda(3-\beta)=1,
	$
	because $v_\lambda(\beta)=1$ and $v_\lambda(3)=2$. Since
	$v_\lambda(\pi)>0$, the element $\pi+2-\beta$ is a unit. Therefore
	\[
	v_\lambda(\pi)
	=
	v_\lambda(3-\beta)
	=
	1.
	\]
	Thus $\pi=\alpha-1$ is a uniformizer of $\mathcal O$.
	Since $\alpha\in\mathcal O=\mathbb Z_3[\beta]$, write
	$
	\alpha=a+b\beta,
	~
	a,b\in\mathbb Z_3.
	$
As \(\alpha\equiv1\pmod\lambda\), we have \(v_\lambda(a-1)\ge1\). Since \(a-1\in\mathbb Z_3\) and \(v_\lambda\) takes even values on \(\mathbb Q_3^\times\), it follows that
$
v_\lambda(a-1)\ge2$. On the other hand,
	$
	\alpha-1=(a-1)+b\beta
	$
	has $\lambda$-adic valuation $1$ and thus $b\in\mathbb Z_3^\times$.
	Therefore
	$
	\beta=b^{-1}(\alpha-a)\in\mathbb Z_3[\alpha],
	$
	and hence
	\[
	\mathbb Z_3[\alpha-1]
	=
	\mathbb Z_3[\alpha]
	=
	\mathbb Z_3[\beta]
	=
	\mathcal O.
	\]
	We therefore obtain
	\[
	\widehat{\mathbb T_{\mathfrak m}}
	\otimes_{\mathbb Z_3}\mathbb Q_3
	\simeq
	\mathbb Q_3\times\mathbb Q_3(\sqrt3),
	\]
	where the first factor is the $f$-branch and the second is the surviving $2$-old eigenspace coming from $63.2.a.b$.
	
	We now determine the integral gluing. Under the two eigenvalue maps, every
	element of $\widehat{\mathbb T_{\mathfrak m}}$ has the same residue on the
	two branches, because both branches lie over $\mathfrak m$. Hence
	\[
	\widehat{\mathbb T_{\mathfrak m}}
	\subseteq
	\left\{
	(a,b)\in\mathbb Z_3\times\mathcal O:
	b-a\in\lambda\mathcal O
	\right\}.
	\]
	On the other hand,
	\[
	U_2-1\longmapsto(0,\alpha-1)=(0,\pi).
	\]
	Since
	$
	\mathcal O=\mathbb Z_3[\pi]
	~\text{and}~
	\pi\mathcal O=\lambda\mathcal O,
	$
	the subring generated by the scalar diagonal $\mathbb Z_3$ and $U_2-1$
	contains
	$
	0\times\lambda\mathcal O.
	$
	Indeed, if $c\in\mathcal O$, write $c=g(\pi)$ with
	$g(X)\in\mathbb Z_3[X]$. Then, with $t=U_2-1$,
	\[
	tg(t)
	\longmapsto
	(0,\pi)\bigl(g(0),g(\pi)\bigr)
	=
	(0,\pi c).
	\]
	Hence every element of
	$0\times\pi\mathcal O=0\times\lambda\mathcal O$ lies in the image of
	$\widehat{\mathbb T_{\mathfrak m}}$. It follows that every pair $(a,b)$
	satisfying $b-a\in\lambda\mathcal O$ lies in the image, since
	$
	(a,b)=(a,a)+(0,b-a).
	$
	We conclude that
	\[
	\widehat{\mathbb T_{\mathfrak m}}
	\simeq
	\mathbb Z_3\times_{\mathbb F_3}\mathcal O.
	\]
	The preceding computation is reminiscent of Kilford's method for his
	explicit non-Gorenstein Hecke algebras \cite{kilford2002some}. There, after
	identifying the characteristic-zero eigenforms lying above a fixed
	residual system of Hecke eigenvalues, the completed local Hecke algebra
	is realized explicitly as an integral subring of the product of the
	corresponding $p$-adic coefficient rings. Our computation follows the
	same general strategy: we first determine the characteristic-zero
	eigenspaces which survive localization at $\mathfrak m$, obtaining
	$
	\mathbb Q_3\times\mathbb Q_3(\sqrt3),
	$
	and then determine the integral gluing of the two branches from their
	Hecke eigenvalues. The outcome, however, is opposite to Kilford's
	examples. Whereas his explicit integral orders are non-Gorenstein, here
	the gluing is
	$
	\mathbb Z_3\times_{\mathbb F_3}\mathbb Z_3[\sqrt3],
	$
	which is Gorenstein. Thus the same computational perspective that detects
	failure of the Gorenstein property in Kilford's examples produces here a
	Gorenstein local algebra despite the failure of multiplicity one.
	Since
	$\beta\in\lambda\mathcal O$, the preceding paragraph shows that
	$
	x=(0,\beta)
	$
	belongs to $\widehat{\mathbb T_{\mathfrak m}}$. Then
	$
	x^2=(0,3),
	x^3=3x,
	$
	and $1,x,x^2$ form a $\mathbb Z_3$-basis. Thus
	\[
	\widehat{\mathbb T_{\mathfrak m}}
	\simeq
	\mathbb Z_3[X]/(X^3-3X)
	=
	\mathbb Z_3[X]/\bigl(X(X^2-3)\bigr).
	\]
	Hence $\widehat{\mathbb T_{\mathfrak m}}$ is 
	complete intersection and, in particular, Gorenstein.	Since a Noetherian local ring is Gorenstein if and only if
	its completion is Gorenstein, we conclude that
	$\mathbb T_{\mathfrak m}$ is Gorenstein.

	We have therefore exhibited a maximal ideal for which
	$\mathbb T_{\mathfrak m}$ is Gorenstein while multiplicity one fails.
	We finish the example by computing $\delta_{\mathrm{def}}(\mathfrak{m})=3$.
	Put \(J=J_0(126)\), and let \(A\subset J\)
	be the elliptic curve associated with
	\(f\). It is the dual of the optimal elliptic curve $126.b5$. Let \(B\) be its Hecke-stable
	complementary abelian subvariety.
	The modular degree of \(A\) is \(8\), i.e. the intersection
	\(A\cap B\) has exponent \(8\).
	Consequently, the addition isogeny
	$
	A\times B\longrightarrow J
	$
	has kernel of order prime to \(3\).
	It therefore induces a Hecke-equivariant
	isomorphism on the covariant $\mathfrak{m}$-adic  Tate modules
	\[
   L=T_\mathfrak{m}(J)\simeq 	T_\mathfrak{m}(A)\oplus T_\mathfrak{m}(B).
	\]
	By the preceding computation of the local
	Hecke algebra,
	$
	\widehat{\mathbb T_{\mathfrak m}}\simeq
	\mathbb Z_3\times_{\mathbb F_3}\mathcal O.
	$
	The elliptic factor contributes
	$
    T_{\mathfrak m}(A)\simeq\mathbb Z_3^2.
	$
	The only other characteristic-zero Hecke
	component surviving at \(\mathfrak m\)
	is the component arising from the
	level-\(63\) newform considered above.
	The image of the local Hecke algebra on
	this component is \(\mathcal O\).
	Hence \(T_{\mathfrak m}(B)\) is a finite
	torsion-free \(\mathcal O\)-module
	of rank \(2\) over  \(\mathcal O\).
	We therefore obtain an isomorphism of
	\(\widehat{\mathbb T_{\mathfrak m}}'\)-modules
	$
	L\simeq
	\mathbb Z_3^2\oplus\mathcal O^2
	\simeq 	\widehat{\mathbb T_{\mathfrak m}}'^{\,2},
	$
	where $\widehat{\mathbb T_{\mathfrak m}}'=\mathbb Z_3\times\mathcal O$.
	Under this identification, consider the
	natural inclusion
	$
	V=	\widehat{\mathbb T_{\mathfrak m}}^2\subseteq \widehat{\mathbb T_{\mathfrak m}}'^{\,2}=L.
	$
	Then \(V\) is a free \(\widehat{\mathbb T_{\mathfrak m}}\)-module of
	rank \(2\) and has finite index in \(L\).
	By the fiber-product description,
	\[
	\widehat{\mathbb T_{\mathfrak m}}=
	\left\{
	(a,b)\in\mathbf Z_3\times\mathcal O:
	b-a \in{\sqrt{3}\mathcal O}
	\right\},
	\]
	we conclude that
	$
	\widehat{\mathbb T_{\mathfrak m}}'/\widehat{\mathbb T_{\mathfrak m}}\simeq\mathbb F_3
	$
	as abelian groups. Consequently,
	\[
	L/V
	\simeq(\widehat{\mathbb T_{\mathfrak m}}'/\widehat{\mathbb T_{\mathfrak m}})^2
	\simeq(\mathbb Z/3\mathbb Z)^2.
	\]
	In particular,
	$
	\exp(L/V)=3,
	$
	and therefore
	\[
	\delta_{\mathrm{def}}(\mathfrak m)\leq 3.
	\]
	Since $m_E=8$ and $r_E=24$, by Theorem A, $	\delta_{\mathrm{def}}(\mathfrak m)\geq 3$, and hence $	\delta_{\mathrm{def}}(\mathfrak m)=3$.
\end{exa}

\medskip
\textbf{Acknowledgments.}
I am very grateful to Patrick Allen for his guidance, support, and many helpful
discussions throughout this work. I used the free version of ChatGPT
(GPT-5.6) during the preparation of the manuscript solely for language and
stylistic editing. All mathematical ideas, arguments, and results are entirely
my own, and I take full responsibility for the mathematical content.

%% file: Refe.bib
@incollection{agashe2011modular,
	title={The modular degree, congruence primes, and multiplicity one},
	author={Agashe, Amod and Ribet, Kenneth A and Stein, William A},
	booktitle={Number Theory, Analysis and Geometry: In Memory of Serge Lang},
	pages={19--49},
	year={2011},
	publisher={Springer}
}

@article{agashe2005visible,
	title={Visible evidence for the Birch and Swinnerton-Dyer conjecture for modular abelian varieties of analytic rank zero},
	author={Agashe, Amod and Stein, William},
	journal={Mathematics of Computation},
	volume={74},
	number={249},
	pages={455--484},
	year={2005}
}

@article{helm2007maps,
	title={On maps between modular Jacobians and Jacobians of Shimura curves},
	author={Helm, David},
	journal={Israel Journal of Mathematics},
	volume={160},
	number={1},
	pages={61--117},
	year={2007},
	publisher={Springer}
}

@misc{conrad2015abelian,
	author       = {Conrad, Brian},
	title        = {Abelian Varieties},
	note         = {Lecture notes by Tony Feng, Stanford University},
	year         = {2015}
}

@article{hamidi2026,
title={On the Comparison of the D-new Modular Degree and the Shimura Degree of Modular Abelian Varieties},
author={Hamidi, Mohammad Masih},
journal={arXiv preprint arXiv:2607.23244},
year={2026}
}

@misc{lmfdb,
author       = {The {LMFDB Collaboration}},
title        = {The {L}-functions and modular forms database},
howpublished = {\url{https://www.lmfdb.org}},
year         = {2026},
note         = {[Online; accessed 29 August 2026]}
}

@book{cremona1997algorithms,
author    = {Cremona, John E.},
title     = {Algorithms for Modular Elliptic Curves},
edition   = {2},
publisher = {Cambridge University Press},
year      = {1997}
}

@article{kilford2002some,
	title={Some non-Gorenstein Hecke algebras attached to spaces of modular forms},
	author={Kilford, L. J. P.},
	journal={Journal of Number Theory},
	volume={97},
	number={1},
	pages={157--164},
	year={2002},
	publisher={Elsevier}
}
